\providecommand{\U}[1]{\protect\rule{.1in}{.1in}}
\newtheorem{theorem}{Theorem}
\theoremstyle{plain}
\newtheorem{example}{Example}
\newtheorem{proposition}{Proposition}
\newtheorem{remark}{Remark}
\numberwithin{equation}{section}
\begin{document}
\title[Kesten moments]{Yet another way of calculating moments of the Kesten's distribution and its
consequences for Catalan numbers and Catalan triangles}
\author{Pawe\l \ J. Szab\l owski}
\address{Emeritus in Department of Mathematics and Information Sciences, \\
Warsaw University of Technology\\
ul Koszykowa 75, 00-662 Warsaw, Poland}
\thanks{The author would like to thank the unknown referee for his long and detailed
reports containing, typos, errors, and most importantly a list of suggestions,
that substantially helped to improve the paper. I'm impressed and very
grateful for it.}
\subjclass[2010]{Primary 05A19, 11B39, Secondary 60E10, 11B83}
\date{June, 2021}
\keywords{sequence of moments, Kesten distribution, Catalan numbers, Catalan Triangles,
Lucas numbers, Fibonacci numbers, Fine numbers.}

\begin{abstract}
We calculate moments of the so-called Kesten distribution by means of the
expansion of the denominator of the density of this distribution and then
integrate all summands with respect to the semicircle distribution. By
comparing this expression with the formulae for the moments of Kesten's
distribution obtained by other means, we find identities involving polynomials
whose power coefficients are closely related to Catalan numbers, Catalan
triangles, binomial coefficients. Finally, as applications of these identities
we obtain various interesting relations between the aforementioned numbers,
also concerning Lucas, Fibonacci and Fine numbers.

\end{abstract}
\maketitle

\section{Introduction}

The purpose of this note is to calculate a sequence of moments of the Kesten's
distribution and thus, by comparison with the existing formulae, to obtain
some polynomial type identities involving Catalan and some other sequences of
numbers related to them (see Proposition \ref{main}). In 2015 in \cite{Szab15}
and in 2020 in \cite{Szab-Kes} Szab\l owski calculated in two different ways
the moments of Kesten distribution. Later T. Hasegawa and S. Saito in
\cite{HaSe21} evaluated these moments in some other ways and, equating the
results, they found interesting identities involving Catalan and related
numbers. So in this note, we will calculate these moments in yet another way
and obtain some other identities, involving, surprisingly, other important
numbers sequences like Fibonacci, Lucas and Fine numbers.

\section{Basic Ingredients}

We start with the modified semicircle distribution, i.e., distribution with
the density
\[
f_{S}(x)=\left\{
\begin{array}
[c]{ccc}%
\frac{1}{2\pi}\sqrt{4-x^{2}} & if & \left\vert x\right\vert \leq2\\
0 & if & \left\vert x\right\vert >2
\end{array}
\right.  .
\]
It is well known that the famous Catalan numbers are the moment sequence of
$f_{S}$. More precisely, we have%
\begin{equation}
\int_{-2}^{2}x^{2n}f_{S}(x)dx=C_{n}=\frac{1}{n+1}\binom{2n}{n}. \label{mp=r}%
\end{equation}
We also know (see, e. g.,\cite{Hor07} (4.8 p. 107)) that, after inserting
proper values of parameters, the moment generating function of this
distribution is equal to%
\[
g_{S}(z)=\frac{2}{\sqrt{1-4z^{2}}+1},
\]
for $\left\vert z\right\vert \leq1/2.$ One can also easily notice, that
\begin{equation}
\sum_{k\geq0}t^{k}C_{k}\allowbreak=\allowbreak\frac{2}{\sqrt{1-4t}+1},
\label{genC}%
\end{equation}
which is valid for $\left\vert t\right\vert \leq1/4$, as the evaluation of the
convergence radius and the study of the convergence interval show, thanks to
the relation:
\begin{equation}
\binom{2n}{n}/4^{n}\allowbreak\underset{n\rightarrow+\infty}{=}\allowbreak
O(n^{-1/2}). \label{ON}%
\end{equation}

The other ingredient is the definition of the Kesten distribution. It was
considered in many papers including \cite{Kesten59}, \cite{Szab15},
\cite{Szab-Kes}, and recently in \cite{HaSe21} with different parametrization.
Let us consider the Kesten distribution parametrized basically as in
\cite{HaSe21} with parameter $q$ replaced by $r$, i. e., with the following
density:%
\begin{equation}
f_{K}(x|p,r)=\left\{
\begin{array}
[c]{ccc}%
0 & if & \left\vert x\right\vert >2\sqrt{r}\\
\frac{p}{2\pi}\frac{\sqrt{4r-x^{2}}}{p^{2}-(p-r)x^{2}} & if & \left\vert
x\right\vert \leq2\sqrt{r}%
\end{array}
\right.  , \label{Kes}%
\end{equation}
for $0<p\leq2r.$

We point out that, if $p=r$, then
\begin{equation}
f_{K}(x|r,r)\allowbreak=\allowbreak\frac{1}{\sqrt{r}}f_{S}(x/\sqrt{r}).
\label{p=r}%
\end{equation}

\section{Main results}

Thus, we have
\[
\int_{-2\sqrt{r}}^{2\sqrt{r}}x^{2n}f_{K}(x|r,r)dx=r^{n}C_{n}=\frac{r^{n}}%
{n+1}\binom{2n}{n}.
\]
Now notice that for
\begin{equation}
\left\vert \frac{p-r}{p^{2}}\right\vert x^{2}\leq\left\vert \frac{p-r}{p^{2}%
}\right\vert 4r<1, \label{cond}%
\end{equation}
we have the following expansion:%
\[
f_{K}(x|p,r)=\frac{r}{p}\frac{\sqrt{4r-x^{2}}}{2\pi r}\sum_{k\geq0}\left(
\frac{p-r}{p^{2}}\right)  ^{k}x^{2k}.
\]
When we consider $\left\vert x\right\vert \leq2\sqrt{r}$ and parameters
satisfying (\ref{cond}), the series for $f_{K}(x|p,r)$ is uniformly convergent
and we can integrate term by term obtaining%
\begin{equation}
M_{2m}(p,r)=\int_{-2\sqrt{r}}^{2\sqrt{r}}x^{2m}f_{K}(x|p,r)dx=\frac{r}{p}%
\sum_{k\geq0}\left(  \frac{p-r}{p^{2}}\right)  ^{k}r^{k+m}C_{k+m}. \label{mom}%
\end{equation}
Let us introduce a new auxiliary variable
\begin{equation}
t\allowbreak=\allowbreak\frac{r}{p}. \label{z}%
\end{equation}
General conditions on $p$ and $r$ require that $t\geq1/2$. From inequalities
(\ref{cond}), we must also have $\left\vert t(1-t)\right\vert <1/4$, however
taking into account (\ref{ON}), we can notice that (\ref{mom}) converges also
for $\left\vert t(1-t)\right\vert \allowbreak=\allowbreak1/4.$ This leads to
the following condition
\begin{equation}
1/2\leq t\leq(1+\sqrt{2})/2. \label{ogr}%
\end{equation}

Summarizing, we get the following result.

\begin{theorem}
Let the Kesten distribution be defined by the density $f_{K}$ given by
(\ref{Kes}). Let the real positive parameters $p$ and $r$ satisfy the
following relationship%
\[
2r\geq p\geq2(\sqrt{2}-1)r,
\]
then we have%
\begin{equation}
M_{2m}(p,r)=\frac{p^{m}}{(1-t)^{m}}\left(  1-\sum_{k=0}^{m-1}t^{k+1}\left(
1-t\right)  ^{k}C_{k}\right)  , \label{nm}%
\end{equation}
with $t$ given by (\ref{z}), with an obvious condition $\left\vert
t(1-t)\right\vert \leq1/4$ and $t\neq1$. For $t\allowbreak=\allowbreak1$ we
have $M_{2m}(r,r)\allowbreak=\allowbreak r^{m}C_{m}$, because of (\ref{mp=r})
and (\ref{p=r}).
\end{theorem}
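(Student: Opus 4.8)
The plan is to start directly from the term-by-term integrated series (\ref{mom}) and massage it into closed form using the Catalan generating function (\ref{genC}). First I would introduce $t=r/p$ as in (\ref{z}) and record the crucial algebraic identity $r\cdot\frac{p-r}{p^{2}}=t(1-t)$, which turns (\ref{mom}) into $M_{2m}(p,r)=t\,r^{m}\sum_{k\ge0}\bigl(t(1-t)\bigr)^{k}C_{k+m}$. Writing $u=t(1-t)$ and reindexing by $j=k+m$, the sum becomes $u^{-m}\bigl(\sum_{j\ge0}u^{j}C_{j}-\sum_{j=0}^{m-1}u^{j}C_{j}\bigr)$, so everything reduces to evaluating the full generating series $\sum_{j\ge0}u^{j}C_{j}$.

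The heart of the argument is that evaluation via (\ref{genC}). The key observation is $1-4u=1-4t(1-t)=(1-2t)^{2}$, so $\sqrt{1-4u}=|1-2t|$; since the hypothesis $p\le2r$ forces $t\ge1/2$, we have $|1-2t|=2t-1$ and hence $\sum_{j\ge0}u^{j}C_{j}=\frac{2}{(2t-1)+1}=\frac1t$. Substituting back gives $M_{2m}(p,r)=t\,r^{m}u^{-m}\bigl(\frac1t-\sum_{j=0}^{m-1}u^{j}C_{j}\bigr)=\frac{r^{m}}{u^{m}}\bigl(1-t\sum_{j=0}^{m-1}u^{j}C_{j}\bigr)$, and then using $u^{m}=t^{m}(1-t)^{m}$ together with $t^{m}=r^{m}/p^{m}$ converts the prefactor $r^{m}/u^{m}$ into $p^{m}/(1-t)^{m}$ and the sum into $\sum_{j=0}^{m-1}t^{j+1}(1-t)^{j}C_{j}$, which is exactly (\ref{nm}).

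It then remains to pin down the range of validity. Translating $2r\ge p\ge2(\sqrt2-1)r$ into $t=r/p$ gives precisely $1/2\le t\le(1+\sqrt2)/2$, i.e.\ (\ref{ogr}), which coincides with the stated condition $|t(1-t)|\le1/4$; the value $t=1$ is excluded because the manipulation divides by $(1-t)$, and that case (equivalently $p=r$) is handled separately via (\ref{p=r}) and (\ref{mp=r}), yielding $M_{2m}(r,r)=r^{m}C_{m}$. I expect the only genuinely delicate point to be the boundary behaviour: the identity (\ref{mom}) was obtained by term-by-term integration under the strict condition (\ref{cond}), i.e.\ $|t(1-t)|<1/4$, while the theorem claims validity also at the endpoints $|t(1-t)|=1/4$. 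This is exactly where (\ref{ON}) enters — since $C_{j}/4^{j}=O(j^{-1/2})$, both the moment series and the series (\ref{genC}) still converge on the closed interval, so an Abel-type continuity/limiting argument (together with monotone or dominated convergence for the integral) closes that gap; the rest is routine bookkeeping with powers of $t$ and $r$.
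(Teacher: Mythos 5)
Your proposal is correct and follows essentially the same route as the paper: both start from the term-by-term integrated series (\ref{mom}), recognize that $r(p-r)/p^{2}=t(1-t)$, and use the Catalan generating function (\ref{genC}) at $u=t(1-t)$ (where $\sqrt{1-4u}=2t-1$ for $t\geq 1/2$) to evaluate the tail as $\frac{1}{t}-\sum_{j=0}^{m-1}u^{j}C_{j}$. Your explicit attention to the endpoint case $|t(1-t)|=1/4$, where the strict condition (\ref{cond}) fails and an Abel/dominated-convergence argument is needed to justify the interchange of sum and integral, is in fact slightly more careful than the paper, which only remarks on convergence of the series via (\ref{ON}).
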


\begin{proof}
Keeping in mind that $r/p\allowbreak=t$, let us analyze first when the series
(\ref{mom}) is convergent. Namely, it is absolutely convergent if
\[
\left\vert t(1-t)\right\vert \leq\frac{1}{4},
\]
because of (\ref{ON}), that is, when%
\[
-\frac{1}{4}\leq t-t^{2}\leq\frac{1}{4}.
\]
The inequality $\frac{r}{p}-(\frac{r}{p})^{2}\leq\frac{1}{4}$ is equivalent to
the following $(\frac{1}{2}-\frac{r}{p})^{2}\geq0$ which is always true when
$p\neq0,$ in particular for all $p$ and $r$ such that $2r\geq p>0.$ The second
one leads to inequality
\[
t^{2}-t-\frac{1}{4}\leq0,
\]
which gives $\frac{1-\sqrt{2}}{2}\leq t\leq\frac{1+\sqrt{2}}{2}$, a condition
that is always satisfied when $2r\geq p\geq2(\sqrt{2}-1)r>0$.

From the ordinary generating function (\ref{genC}) of Catalan numbers, since
$\left\vert t(1-t)\right\vert \leq1/4$, we also have the following identity:
\[
\sum_{n\geq0}t^{n}(1-t)^{n}C_{n}\allowbreak=\allowbreak\frac{1}{t},
\]
Using this identity, since $\left\vert t(1-t)\right\vert \leq1/4$, from
(\ref{mom}) when $t\neq1$ we find
\begin{gather*}
M_{2m}(p,r)=\frac{r}{p}\frac{p^{2m}}{(p-r)^{m}}\sum_{k\geq0}t^{k+m}%
(1-t)^{k+m}C_{k+m}=\\
=\frac{r}{p}\frac{p^{m}}{(1-\frac{r}{p})^{m}}\left(  \sum_{k\geq0}%
t^{k}(1-t)^{k}C_{k}-\sum_{k=0}^{m-1}t^{k}(1-t)^{k}C_{k}\right)  =\\
=p^{m}\frac{t}{(1-t)^{m}}\left(  \frac{1}{t}-\sum_{k=0}^{m-1}t^{k}%
(1-t)^{k}C_{k}\right)  .
\end{gather*}
Now it suffices to multiply the expression in round brackets by $t$.
\end{proof}

\begin{remark}
\label{refer}Formula (\ref{nm}) can be derived from the unnumbered formula%
\[
M_{2k}=\frac{p}{p-q}\left(  \frac{p^{2k-1}}{(p-q)^{k-1}}-\sum_{m=1}^{k}%
\frac{\binom{2m-1}{m}}{m}q^{m}\frac{p^{2(k-m)}}{(p-q)^{k-m}}\right)
\]
placed in Comment 1 of \cite{HaSe21} thanks to equality%
\[
C_{m-1}=\frac{1}{m}\binom{2m-2}{m-1}=\frac{1}{2m-1}\binom{2m-1}{m}%
\]
and keeping in mind that in our notation $q=r,$ $p\neq r$ and $t\allowbreak
=\allowbreak r/p$. Indeed we have
\begin{align*}
M_{2k}  &  =\frac{p}{p-r}\left(  \frac{p^{2k-1}}{(p-r)^{k-1}}-\sum_{m=1}%
^{k}\frac{\binom{2m-1}{m}}{m}r^{m}\frac{p^{2(k-m)}}{(p-r)^{k-m}}\right)  =\\
&  =\frac{p}{p-r}\left(  \frac{p^{2k-1}}{(p-r)^{k-1}}-\sum_{j=0}^{k-1}%
C_{j}r^{j+1}\frac{p^{2(k-j-1)}}{(p-r)^{k-j-1}}\right)  =\\
&  =\frac{p^{2k}}{\left(  p-r\right)  ^{k}}\left(  1-\sum_{j=0}^{k-1}%
C_{j}r^{j+1}\frac{1}{p}\left(  \frac{p-r}{p^{2}}\right)  ^{j}\right)  .
\end{align*}
This calculation was done by the referee in his report.

Let us underline the important property of the sequences that we are
considering, namely, that the sequences :
\[
\left\{  M_{2m}\right\}  _{m\geq0},\left\{  \frac{1}{(1-t)^{m}}\left(
1-\sum_{k=0}^{m-1}t^{k+1}\left(  1-t\right)  ^{k}C_{k}\right)  \right\}
_{m\geq0}%
\]
and also the sequences for $d\in\lbrack0,1]$
\[
\left\{  1-d\sum_{k=0}^{m-1}t^{k+1}\left(  1-t\right)  ^{k}C_{k}\right\}
_{m\geq0},
\]
are moment sequences. The last statements follow from the fact that the
product of two moment sequences is another moment sequence and that the convex
combination of two moment sequences is another moment sequence. For a brief
recollection of facts concerning the moment sequences, see e.
g.,\cite{Sokal20} or Appendix in \cite{Szabl21}.
\end{remark}

\begin{remark}
As a matter of honesty, the author was able to see the first version of the
paper of Hasegawa and Saito. In this version, there were not present Comment 1
and Comment 2. It was in May and June 2021. Their paper has inspired the
author to write this note. To clarify everything, the final form of the paper
\cite{HaSe21} appeared at the end of October 2021. In the final form, the
authors included three Comments. In the first Comment the formula from the
first line of the Remark \ref{refer} appeared while in Comment 2 the expansion
(\ref{mom}) appeared. In the third comment, the authors stated that these
formulae are promising and that they will research further on these formulae.
Anyway, the Remark \ref{refer} indicates that to get the crucial formula
(\ref{nm}) one did not need to exploit the new way of calculating even moments
of Kesten distribution.
\end{remark}

Let us now compare this result with known formulae for the moments of Kesten distribution.

Let us notice that we have the following equality%
\[
f_{CN}(x|0,\rho,0)=f_{K}(x\sqrt{r},p,r)\sqrt{r}%
\]
involving the distribution $f_{CN}(x|y,\rho,q)$ considered in \cite{Szab15}
with $y\allowbreak=\allowbreak q\allowbreak=\allowbreak0$ and $\rho
^{2}\allowbreak=\allowbreak1-p/r$ and the distribution defined in (\ref{Kes}).
Thanks to this equality and to Proposition 3 part (i) in \cite{Szab15}, we
find%
\[
M_{2m}(p,r)=r^{m}\sum_{k=0}^{m}\left(  \frac{p}{r}-1\right)  ^{m-k}S_{m,k},
\]
where
\[
S_{m,k}\allowbreak=\allowbreak\binom{2m}{k}-\binom{2m}{k-1},
\]
with the understanding that $\binom{2m}{-1}\allowbreak=\allowbreak0.\ $Hence
in terms of $t\allowbreak=\allowbreak r/p$ and $t\neq1$, we have:%
\begin{equation}
M_{2m}(p,r)=p^{m}\sum_{k=0}^{m}\left(  -\frac{r}{p}+1\right)  ^{m-k}\left(
\frac{r}{p}\right)  ^{k}S_{m,k}=p^{m}\sum_{k=0}^{m}t^{k}\left(  1-t\right)
^{m-k}S_{m,k}. \label{mS}%
\end{equation}
In \cite{HaSe21} two other expressions for the moments of Kesten distribution
can be found. Namely the following formulae:%
\begin{align}
M_{2m}(p,r)  &  =p\sum_{j=0}^{m-1}p^{m-1-j}r^{j}T_{m-1,j},\label{m1}\\
M_{2m}(p,r)  &  =p\sum_{j=0}^{m-1}(p-r)^{j}r^{m-1-j}B_{m,j+1}, \label{m2}%
\end{align}
where numbers $T_{m,j}$ and $B_{m,j}$ are called Catalan triangles, depending
on the author. The numbers $T_{m,j}$ are defined as
\[
T_{m,j}=\frac{m-j+1}{m+1}\binom{m+j}{m}%
\]
for integers $m,j$ such that $m\geq j\geq0$ and the related sequence is
A009766 in Sloane's Encyclopedia \cite{Sloan}, the numbers $B_{k,j}$,
introduced by L.W. Shapiro in \cite{Shapiro76} and with related sequence
A039598 in OEIS \cite{Sloan}, are given by:%
\[
B_{k,j}=\frac{j}{k}\binom{2k}{k-j},
\]
where the integers $k,j$ satisfy $k\geq j\geq1.$ Comparing formulae (\ref{mS})
(\ref{m1}) and (\ref{m2}) we find the following result.

\begin{proposition}
\label{main}i) For all $m\geq1$ and $t\in\mathbb{C}$ we obtain:
\begin{gather}
\left(  1-t\right)  ^{m}\sum_{k=0}^{m}S_{m,k}t^{k}\left(  1-t\right)
^{m-k}=\left(  1-t\right)  ^{m}\sum_{k=0}^{m-1}T_{m-1,k}t^{k}=\label{=1}\\
=\left(  1-t\right)  ^{m}\sum_{k=0}^{m-1}B_{m,k+1}\left(  1-t\right)
^{k}t^{m-1-k}=1-\sum_{k=0}^{m-1}C_{k}t^{k+1}\left(  1-t\right)  ^{k}.
\label{=2}%
\end{gather}

ii) For all $m\geq1$ and $x\in\mathbb{C}$ we get:
\begin{gather}
\sum_{k=0}^{m}S_{m,k}x^{k}=(x+1)\sum_{k=0}^{m-1}B_{m,k+1}x^{m-1-k}%
=\label{=11}\\
=\sum_{k=0}^{m-1}T_{m-1,k}x^{k}(x+1)^{m-k}=(1+x)^{2m}-\sum_{k=0}^{m-1}%
C_{k}x^{k+1}(x+1)^{2m-2k-1}. \label{=22}%
\end{gather}

\end{proposition}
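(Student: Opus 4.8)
The plan is to exploit the fact that, by the time we reach Proposition \ref{main}, the single quantity $M_{2m}(p,r)$ carries four closed-form expressions valid on the admissible range of parameters: formula (\ref{nm}) from the Theorem, formula (\ref{mS}), and formulae (\ref{m1}) and (\ref{m2}). Equating all four and passing to the variable $t=r/p$ should give (\ref{=1})--(\ref{=2}) almost immediately; part ii) should then follow from part i) by a single rational substitution together with the polynomial identity principle.

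For part i) I would first rewrite (\ref{m1}) and (\ref{m2}) in terms of $t$. Using $r=pt$ one gets $p\cdot p^{m-1-j}r^{j}=p^{m}t^{j}$, so (\ref{m1}) reads $M_{2m}(p,r)=p^{m}\sum_{j=0}^{m-1}T_{m-1,j}t^{j}$; using in addition $p-r=p(1-t)$ one gets $p\cdot(p-r)^{j}r^{m-1-j}=p^{m}(1-t)^{j}t^{m-1-j}$, so (\ref{m2}) reads $M_{2m}(p,r)=p^{m}\sum_{j=0}^{m-1}B_{m,j+1}(1-t)^{j}t^{m-1-j}$. Together with the remaining two formulae, already expressed in $t$, namely $p^{m}\sum_{k=0}^{m}S_{m,k}t^{k}(1-t)^{m-k}$ and $\frac{p^{m}}{(1-t)^{m}}\bigl(1-\sum_{k=0}^{m-1}C_{k}t^{k+1}(1-t)^{k}\bigr)$, dividing the resulting chain of equalities by $p^{m}$ and multiplying by $(1-t)^{m}$ produces exactly (\ref{=1})--(\ref{=2}). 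This is established so far only for $t=r/p$ coming from an admissible pair with $t\neq1$, i.e.\ for all $t$ in the interval $[1/2,(1+\sqrt{2})/2]\setminus\{1\}$; but every member of (\ref{=1})--(\ref{=2}) is a polynomial in $t$, so agreement on this infinite set forces agreement on all of $\mathbb{C}$, in particular at the removable point $t=1$.

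For part ii) I would substitute $t=x/(1+x)$, so that $1-t=1/(1+x)$ and every monomial $t^{a}(1-t)^{b}$ becomes $x^{a}/(1+x)^{a+b}$; this is legitimate for $x\neq-1$. Multiplying the four-term identity of part i) through by $(1+x)^{2m}$ then clears each denominator exactly: the $S$-term becomes $\sum_{k=0}^{m}S_{m,k}x^{k}$, the $B$-term becomes $(1+x)\sum_{k=0}^{m-1}B_{m,k+1}x^{m-1-k}$, the $T$-term becomes $\sum_{k=0}^{m-1}T_{m-1,k}x^{k}(1+x)^{m-k}$, and the Catalan term becomes $(1+x)^{2m}-\sum_{k=0}^{m-1}C_{k}x^{k+1}(1+x)^{2m-2k-1}$, which are precisely (\ref{=11})--(\ref{=22}). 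Both sides being polynomials in $x$ and agreeing for all $x\neq-1$, they agree for all $x\in\mathbb{C}$.

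The substitutions themselves are routine; the two points that need care are the bookkeeping of the powers of $p$ in (\ref{m1}) and (\ref{m2}) --- one must verify that $p^{m-1-j}r^{j}$ and $(p-r)^{j}r^{m-1-j}$ both collapse to $p^{m-1}$ times the correct monomial in $t$, so that a common factor $p^{m}$ genuinely cancels --- and the two invocations of the fact that polynomials agreeing on infinitely many arguments are identically equal, which is exactly what promotes the admissible ranges of $t$ and $x$ to all of $\mathbb{C}$ and disposes of the apparent singularities at $t=1$ and $x=-1$.
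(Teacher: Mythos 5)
Your proposal is correct and follows essentially the same route as the paper: equate the four closed forms (\ref{nm}), (\ref{mS}), (\ref{m1}), (\ref{m2}) for $M_{2m}(p,r)$ in the variable $t=r/p$ on the admissible range, divide by $p^{m}$, multiply by $(1-t)^{m}$, and extend by the polynomial identity principle; then obtain ii) via $t=x/(x+1)$ and multiplication by $(1+x)^{2m}$. The power-of-$p$ bookkeeping you flag checks out, so nothing further is needed.
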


\begin{proof}
i) Firstly, for $t$ satisfying (\ref{ogr}) we have from (\ref{nm}) and
(\ref{mS}) the following equalities, which are true also when $t=1$:%
\[
\frac{(1-t)^{m}}{p^{m}}M_{2m}=(1-t)^{m}\sum_{k=0}^{m}S_{m,k}t^{k}%
(1-t)^{m-k}=1-\sum_{k=0}^{m-1}t^{k+1}\left(  1-t\right)  ^{k}C_{k}.
\]
Now with parametrization $t=r/p$ formulae (\ref{m1}) and (\ref{m2}) become:%
\begin{gather*}
\frac{(1-t)^{m}}{p^{m}}M_{2m}=(1-t)^{m}\sum_{k=0}^{m-1}T_{m-1,k}t^{k}=\\
=(1-t)^{m}\sum_{k=0}^{m-1}B_{m,k+1}t^{m-1-k}(1-t)^{k}.
\end{gather*}
Therefore, we obtain the chain of equalities given by (\ref{=1}) and
(\ref{=2}). Finally, we observe that all these equalities involve polynomials
in $t$, so we extend their domain from any segment to all complex numbers and
conclude that they hold for all $t\in\mathbb{C}$.

ii) Having proved i) we consider $x\allowbreak=\allowbreak t/(1-t)$, with
$t\neq1$. Then $t\allowbreak=\allowbreak x/(x+1)$ and we consider the
identities (\ref{=1}) and (\ref{=2}) for $x\neq-1.$ Now we multiply both sides
of each of them by $(1+x)^{2m}$. We get immediately forms (\ref{=11}) and
(\ref{=22}). Now again we deal with polynomials hence we can drop assumption
that $x\neq-1$.
\end{proof}

\section{Applications}

The equalities proved in Proposition 1 could have many useful applications.
Thanks to them, we can find relationships between Catalan numbers, Catalan
triangles, binomial coefficients and other special numbers like, e. g.,
Fibonacci and Lucas or Fine numbers. Indeed, in the next two examples we show
some interesting identities obtained respectively from formulae (\ref{=1}),
(\ref{=2}) and formulae (\ref{=11}), (\ref{=22}).

\begin{example}
\label{oldpar} Let us consider formulae (\ref{=1}) and (\ref{=2}), evaluating
them for some special values of $t\in\mathbb{C}$ we find other interesting identities.

a) For $t\allowbreak=\allowbreak2$, $1-t\allowbreak=\allowbreak-1$ and
$t/(1-t)\allowbreak=\allowbreak-2$ and finally we get for all $m\geq1:$%
\begin{gather*}
1-2\sum_{k=0}^{m-1}(-2)^{k}C_{k}=\sum_{k=0}^{m}(-2)^{k}S_{m,k}=\\
=-\sum_{k=0}^{m-1}(-2)^{m-1-k}B_{m,k+1}=(-1)^{m}\sum_{k=0}^{m-1}2^{k}%
T_{m-1,k}.
\end{gather*}
b) For $t\allowbreak=\allowbreak\allowbreak e^{i\pi/3}$, $1-t\allowbreak
=\allowbreak\allowbreak e^{-i\pi/3}$ and $\frac{t}{1-t}\allowbreak
=\allowbreak\allowbreak e^{i2\pi/3}$ and finally we get for all $m\geq1:$%
\begin{gather*}
1-e^{i\pi/3}\sum_{k=0}^{m-1}C_{k}=e^{-im\pi/3}\sum_{k=0}^{m-1}e^{ik\pi
/3}T_{m-1,k}=\\
=e^{-2im\pi/3}\sum_{k=0}^{m}e^{2ik\pi/3}S_{m,k}=e^{-i(2m-1)\pi/3}\sum
_{k=0}^{m-1}e^{i2(m-1-k)\pi/3}B_{m,k+1}.
\end{gather*}
From these formulae considering real parts we find:%
\begin{gather*}
1-\frac{1}{2}\sum_{k=0}^{m-1}C_{k}=\sum_{k=0}^{m-1}\cos((m-k)\pi
/3)T_{m-1,k}=\\
=\sum_{k=0}^{m}\cos(2(m-k)\pi/3)S_{m,k}=\sum_{k=0}^{m-1}\cos((2k+1)\pi
/3)B_{m,k+1},
\end{gather*}
while, when considering imaginary parts, we get:%
\begin{gather*}
\sum_{k=0}^{m-1}C_{k}=\frac{2\sqrt{3}}{3}\sum_{k=0}^{m-1}\sin((m-k)\pi
/3)T_{m-1,k}=\\
=\frac{2\sqrt{3}}{3}\sum_{k=0}^{m}\sin(2(m-k)\pi/3)S_{m,k}=\frac{2\sqrt{3}}%
{3}\sum_{k=0}^{m-1}\sin((2k+1)\pi/3)B_{m,k+1}.
\end{gather*}
We observe that the following functions of $m$ $\cos(m\pi/3)$, $\cos
(2m\pi/3),$ $\frac{2\sqrt{3}}{3}\sin(m\pi/3)$ and $\frac{2\sqrt{3}}{3}%
\sin(2m\pi/3)$ are periodic with periods equal to $6.$ Moreover, we have:
$\cos(m\pi/3)\in\left\{  \pm1/2,\pm1\right\}  $ and $\frac{2\sqrt{3}}{3}%
\sin(m\pi/3)\in\left\{  0,\pm1\right\}  .$ \newline c) For $t\allowbreak
=\allowbreak(1+\sqrt{5})/2$ and $1-t\allowbreak=\allowbreak(1-\sqrt{5})/2$ and
and thanks to the following relations involving Fibonacci numbers $F_{n}$ and
Lucas numbers $L_{n}$ (respectively in OEIS A000045 and A000032):%
\begin{align*}
\left(  \frac{1+\sqrt{5}}{2}\right)  ^{k}  &  =\frac{L_{n}}{2}+\sqrt{5}%
\frac{F_{n}}{2},\\
\left(  \frac{1-\sqrt{5}}{2}\right)  ^{k}  &  =\frac{L_{n}}{2}-\sqrt{5}%
\frac{F_{n}}{2}.
\end{align*}
we get for all $m\geq1:$%
\begin{gather*}
\frac{L_{m}-F_{m}\sqrt{5}}{2}\sum_{k=0}^{m}S_{m,k}\frac{L_{k}+F_{k}\sqrt{5}%
}{2}\frac{L_{m-k}-F_{m-k}\sqrt{5}}{2}=\\
=\frac{L_{m}-F_{m}\sqrt{5}}{2}\sum_{k=0}^{m-1}T_{m-1,k}\frac{L_{k}+F_{k}%
\sqrt{5}}{2}=\\
=\frac{L_{m}-F_{m}\sqrt{5}}{2}\sum_{k=0}^{m-1}B_{m,k+1}\frac{L_{k}-F_{k}%
\sqrt{5}}{2}\frac{L_{m-1-k}+F_{m-1-k}\sqrt{5}}{2}=\\
=1-\frac{L_{1}+F_{1}\sqrt{5}}{2}\sum_{k=0}^{m-1}C_{k}(-1)^{k}.
\end{gather*}
Now we divide all sides by $\frac{L_{m}-F_{m}\sqrt{5}}{2}$ getting
$1/(\frac{L_{m}-F_{m}\sqrt{5}}{2})\allowbreak=\allowbreak(-1)^{m}(\frac
{L_{m}+F_{m}\sqrt{5}}{2})$ and we perform the calculations inside the sums
using also the following identities:
\begin{align*}
L_{n}F_{k}  &  =F_{n+k}+(-1)^{k}F_{k-n},~F_{-k}=(-1)^{k-1}F_{k},\\
L_{n}L_{k}-5F_{n}F_{k}  &  =(-1)^{k}2L_{n-k},~L_{-n}=(-1)^{n}L_{n}.
\end{align*}
Finally, we equate firstly the terms free from the factor $\sqrt{5}$ obtaining%
\begin{gather}
\sum_{k=0}^{m}(-1)^{k}S_{m,k}L_{m-2k}=\sum_{k=0}^{m-1}T_{m-1,k}L_{k}%
=\sum_{k=0}^{m-1}(-1)^{k}B_{m,k+1}L_{m-2k-1}=\label{L2}\\
=(-1)^{m}L_{m}+(-1)^{m+1}L_{m+1}\sum_{k=0}^{m-1}(-1)^{k}C_{k} \label{L3}%
\end{gather}
and then the ones multiplied by $\sqrt{5}$ finding
\begin{gather}
\sum_{k=0}^{m}(-1)^{k}S_{m,k}F_{m-2k}=\sum_{k=0}^{m-1}T_{m-1,k}F_{k}%
=\sum_{k=0}^{m-1}(-1)^{k}B_{m,k+1}F_{m-1-2k}\label{F2}\\
=(-1)^{m}F_{m}-(-1)^{m+1}F_{m+1}\sum_{k=0}^{m-1}(-1)^{k}C_{k}. \label{F3}%
\end{gather}

\end{example}

\begin{example}
\label{newpar} Let us consider identities (\ref{=11}) and (\ref{=22}), we find
some interesting identities using different values of $x$.

a) When $x\allowbreak=\allowbreak1$ we get for $m\geq1:$
\begin{align*}
\sum_{k=0}^{m}S_{m,k}\allowbreak &  =\allowbreak2\sum_{k=0}^{m-1}%
B_{m,k+1}=\sum_{k=0}^{m-1}T_{m-1,k}2^{m-k}=\\
&  =4^{m}-\sum_{k=0}^{m-1}C_{k}2^{2m-2k-1},
\end{align*}

b) When $x\allowbreak=\allowbreak-1/2$ after multiplying all sums by $2^{m} $,
we find for $m\geq1$ $:$%
\begin{gather}
(-1)^{m}\sum_{k=0}^{m}S_{m,k}(-2)^{m-k}=(-1)^{m-1}\sum_{k=0}^{m-1}%
B_{m,k+1}(-2)^{k}=\label{fine1}\\
=\sum_{k=0}^{m-1}(-1)^{k}T_{m-1,k}=\frac{1}{2^{m}}-\sum_{k=0}^{m-1}%
(-1)^{k+1}C_{k}\frac{1}{2^{m-k}}. \label{fine2}%
\end{gather}

c) Let us divide both sides of (\ref{=11}) and (\ref{=22}) by $(x+1)$, if we
pass to the limit $x\rightarrow-1$, taking into account that%
\[
\lim_{x\rightarrow-1}\frac{x^{n}-(-1)^{n}}{x+1}=(-1)^{n-1}n,
\]
and that for $m\geq1$
\[
\sum_{j=0}^{m}S_{m,k}(-1)^{k}=0,
\]
we obtain for all $m\geq1$%
\begin{gather*}
\lim_{x\rightarrow-1}\frac{\sum_{k=0}^{m}S_{m,k}x^{k}}{x+1}=\sum_{k=0}%
^{m}(-1)^{k+1}S_{m,k}k\\
=\sum_{k=0}^{m-1}(-1)^{m-1-k}B_{m,k+1}=(-1)^{m-1}T_{m-1,m-1}=(-1)^{m-1}%
C_{m-1}.
\end{gather*}

\end{example}

\begin{remark}
From Grimaldi's monograph \cite{Grimaldi12}(p. 285) we find, that Fine numbers
$\Phi_{n}$ are given by the relationship:
\[
\Phi_{n}=-\frac{1}{2}\Phi_{n-1}+\frac{1}{2}C_{n},
\]
for $n\geq1$ with $\Phi_{0}\allowbreak=\allowbreak1.$ Using well-known
formulae for the solutions of discrete, non-homogeneous, linear difference
equations we immediately find that
\[
\Phi_{n}=\frac{1}{2}\left(  \frac{-1}{2}\right)  ^{n}+\frac{1}{2}\sum
_{j=0}^{n}C_{j}\left(  \frac{-1}{2}\right)  ^{n-j}.
\]
Now, notice, that the previous formula is in direct relation with formulae
(\ref{fine1}) and(\ref{fine2}). Indeed we have
\[
\frac{1}{2^{m}}-\sum_{k=0}^{m-1}(-1)^{k+1}C_{k}\frac{1}{2^{m-k}}%
=(-1)^{m-1}\Phi_{m-1}.
\]
Hence, dividing every part of the equation (\ref{fine1}) by $(-1)^{m-1}$ and
renaming the indexes, we obtain the following expressions for the Fine
numbers:%
\[
\Phi_{n}=\sum_{j=0}^{n}B_{n+1,j+1}(-2)^{j}=\sum_{j=0}^{n}T_{n,j}%
(-1)^{n-j}=-\sum_{j=0}^{n+1}S_{n+1,j}(-2)^{n+1-j}.
\]

\end{remark}

\end{document}